\newtheorem{theorem}{Theorem}[section]
\newtheorem{corollary}[theorem]{Corollary}
\theoremstyle{definition}
\theoremstyle{remark}
\newtheorem*{remarks}{Remarks}
\numberwithin{equation}{section}
\begin{document}

%\date{31 March 2014}
\keywords{Trace estimates, extremisers}

%\subjclass[2010]{35B45 (primary); 35P10, 35B65 (secondary)}
%\author{Jonathan Bennett??}
\author{Neal Bez}
\address{Neal Bez and Shuji Machihara, Department of Mathematics, Graduate School of Science and Engineering,
Saitama University, Saitama 338-8570, Japan}
\email{\{nealbez,matihara\}@mail.saitama-u.ac.jp}
%\email{nealbez@mail.saitama-u.ac.jp}
\author{Shuji Machihara}
%\address{Shuji Machihara, Department of Mathematics, Faculty of Education, Saitama University, 
%Saitama 338-8570, Japan}
%\email{matihara@mail.saitama-u.ac.jp}
%\begin{thanks} {The first author was supported by a JSPS Invitation Fellowship.}
%\end{thanks}
\author{Mitsuru Sugimoto}
\address{Mitsuru Sugimoto, Graduate School of Mathematics, Nagoya University \\
Furocho, Chikusa-ku, Nagoya 464-8602, Japan}
\email{sugimoto@math.nagoya-u.ac.jp}

\title{Extremisers for the trace theorem on the sphere}

\begin{abstract}
We find all extremisers for the trace theorem on the sphere. We also provide a sharp extension for functions belonging to certain Sobolev spaces with angular regularity.
\end{abstract}

\maketitle

\section{Introduction}
Suppose $d \geq 2$, $s \in (\frac{1}{2},\frac{d}{2})$ and let $\mathcal{R}$ denote the operator which restricts complex-valued functions on $\mathbb{R}^d$ to the unit sphere $\mathbb{S}^{d-1}$ in $\mathbb{R}^d$. The classical trace theorem on $\mathbb{S}^{d-1}$ asserts that $\mathcal{R}$ is a bounded linear operator from the homogeneous Sobolev space of order $s$, denoted $\dot{H}^s(\mathbb{R}^d)$, to $L^2(\mathbb{S}^{d-1},\mathrm{d}\sigma)$, where $\mathrm{d}\sigma$ denotes the induced Lebesgue measure on $\mathbb{S}^{d-1}$. The exact value of the operator norm was found very recently by Ruzhansky and Sugimoto in \cite{RStrace}; in particular, they showed that 
\begin{equation} \label{e:RStrace}
\| \mathcal{R} f \|_{L^2(\mathbb{S}^{d-1},\mathrm{d}\sigma)}^2 \leq 2^{1-2s} \frac{\Gamma(2s-1)\Gamma(\frac{d}{2}-s)}{\Gamma(s)^2\Gamma(\frac{d}{2} - 1 + s)} \| f\|_{\dot{H}^s(\mathbb{R}^d)}^2
\end{equation}
holds for all $f$ in $\dot{H}^s(\mathbb{R}^d)$, and that the constant in \eqref{e:RStrace} is optimal. 

In \cite{RStrace} the question of which (if any) functions are extremisers for \eqref{e:RStrace} (i.e. nonzero functions $f \in \dot{H}^s(\mathbb{R}^d)$ such that we have equality in \eqref{e:RStrace}) was not answered. In this paper we answer this question by establishing the following characterisation of extremisers.
\begin{theorem} \label{t:extremisers}
Let $d \geq 2$ and $s \in (\frac{1}{2},\frac{d}{2})$. Then $f$ is an extremiser for \eqref{e:RStrace} if and only if
\[
f \in \emph{span}\left(\frac{1}{|\cdot|^{d-2s}} * \mathrm{d}\sigma \right) \setminus \{0\}\,,
\]
or equivalently, the Fourier transform, $\widehat{f}$, of $f$ is such that
\[
\widehat{f} \in \emph{span}\left(\frac{J_{\frac{d}{2}-1}(|\cdot|)}{|\cdot|^{\frac{d}{2} + 2s - 1}}  \right) \setminus \{0\}\,.
\]
\end{theorem}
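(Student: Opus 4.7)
Since \cite{RStrace} already establishes sharpness of the constant, only the identification of equality cases remains. The approach is to diagonalise \eqref{e:RStrace} on the Fourier side via a spherical-harmonic decomposition, reducing the estimate to a one-dimensional Cauchy--Schwarz inequality in each angular channel, and then simply tracking when equality can hold in every channel simultaneously.

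Write $\widehat{f}(\rho\omega)=\sum_{k\ge 0}\sum_{j} a_{k,j}(\rho)Y_{k,j}(\omega)$, where $\{Y_{k,j}\}$ is an orthonormal basis of $L^{2}(\mathbb{S}^{d-1})$ of spherical harmonics ($k$ denoting the degree). The classical plane-wave identity
\[
\int_{\mathbb{S}^{d-1}} e^{i\rho\,\theta\cdot\omega} Y_{k,j}(\omega)\,d\sigma(\omega) = c_{d}\, i^{k}\,\rho^{1-d/2}J_{\nu_{k}}(\rho)Y_{k,j}(\theta),\qquad \nu_{k}:=k+\tfrac{d}{2}-1,
\]
writes the restriction of $f$ as $\sum_{k,j} b_{k,j} Y_{k,j}(\theta)$ with $b_{k,j}$ a weighted Bessel integral of $a_{k,j}$. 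By orthonormality and Plancherel in polar coordinates, \eqref{e:RStrace} becomes
\[
\sum_{k,j}\Big|\int_{0}^{\infty} a_{k,j}(\rho) J_{\nu_{k}}(\rho)\rho^{d/2}\,d\rho\Big|^{2}\le (\mathrm{const})\sum_{k,j}\int_{0}^{\infty}|a_{k,j}(\rho)|^{2}\rho^{2s+d-1}\,d\rho.
\]
Splitting $J_{\nu_{k}}(\rho)\rho^{d/2} = \bigl(a_{k,j}(\rho)\rho^{s+(d-1)/2}\bigr)\cdot J_{\nu_{k}}(\rho)\rho^{(1-2s)/2}$ and applying Cauchy--Schwarz channel by channel produces the Bessel weight
\[
C_{k}:=\int_{0}^{\infty} J_{\nu_{k}}(\rho)^{2}\rho^{1-2s}\,d\rho = \frac{\Gamma(2s-1)\Gamma(\nu_{k}-s+1)}{2^{2s-1}\Gamma(s)^{2}\Gamma(\nu_{k}+s)},
\]
evaluated by Weber--Schafheitlin, whose convergence requires exactly $s>1/2$.

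A short Gamma-function calculation gives $C_{k+1}/C_{k} = (\nu_{k}+1-s)/(\nu_{k}+s)$, which is strictly less than $1$ precisely under the hypothesis $s>1/2$. Hence $C_{k}$ is strictly decreasing in $k$, so $\sup_{k} C_{k}=C_{0}$, and (up to the Fourier normalisation implicit in the computation) this agrees with the sharp constant in \eqref{e:RStrace}. Equality in the summed inequality therefore forces (i) $a_{k,j}\equiv 0$ for every $k\ge 1$, since $C_{k}<C_{0}$ strictly, and (ii) equality in the $k=0$ Cauchy--Schwarz, which forces $a_{0,0}(\rho)\rho^{s+(d-1)/2}$ to be a scalar multiple of $J_{d/2-1}(\rho)\rho^{(1-2s)/2}$, i.e.\ $\widehat{f}$ must be radial and proportional to $J_{d/2-1}(|\xi|)/|\xi|^{d/2+2s-1}$. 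This is the second characterisation in the theorem, and the equivalence with $f\in\mathrm{span}(|\cdot|^{-(d-2s)}\ast d\sigma)\setminus\{0\}$ is then immediate from the Riesz-potential multiplier $|\xi|^{-2s}$ together with the identity $\widehat{d\sigma}(\xi)\propto J_{d/2-1}(|\xi|)/|\xi|^{d/2-1}$.

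The main obstacle I expect is not conceptual but a matter of careful bookkeeping: checking that the radial integrals defining $b_{k,j}$ converge absolutely for an arbitrary $f\in\dot{H}^{s}$ (the weights at $\rho=0$ and $\rho=\infty$ sit right at the boundary of convergence, exactly matching the condition $s>1/2$), and justifying the interchange of the $(k,j)$-summation with the Cauchy--Schwarz step. A density reduction to Schwartz functions followed by passage to the limit of both the inequality and its saturation conditions should handle these points cleanly.
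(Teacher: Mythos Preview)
Your argument is correct and complete in outline; the Weber--Schafheitlin evaluation of $C_{k}$ is exactly the paper's $\lambda_{k}$, and the strict monotonicity in $k$ together with the channel-wise Cauchy--Schwarz equality condition pins down the extremisers as claimed.

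The paper takes a dual route. Rather than expanding $\widehat{f}$ directly, it introduces $\mathcal{S}=\mathcal{R}D^{-s}$ and computes the spectrum of $\mathcal{S}\mathcal{S}^{*}$ on $L^{2}(\mathbb{S}^{d-1})$: writing $\mathcal{S}\mathcal{S}^{*}$ as a convolution against $|\omega-\varphi|^{-(d-2s)}$ on the sphere, the Funk--Hecke formula gives $\mathcal{S}\mathcal{S}^{*}P_{k}=\lambda_{k}P_{k}$ with the same $\lambda_{k}$ you obtain. This yields $\|\mathcal{S}^{*}G\|_{L^{2}}^{2}\le\lambda_{0}\|G\|_{L^{2}}^{2}$ with equality precisely for constant $G$, and then a short duality/Cauchy--Schwarz argument (take $G=\mathcal{S}g/\|\mathcal{S}g\|$) transfers the equality characterisation from $\mathcal{S}^{*}$ back to $\mathcal{S}$, giving $g\in\mathcal{S}^{*}(\mathcal{H}_{0})$. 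Your primal decomposition is arguably more transparent for this single result, since it bypasses the duality step and makes the saturation condition in each channel visible at once; the paper's formulation via $\mathcal{S}\mathcal{S}^{*}$, on the other hand, packages the diagonalisation as an operator identity that immediately generalises to Sobolev spaces with angular regularity $\theta(-\Lambda)$, which is the paper's next aim. The bookkeeping concern you flag (absolute convergence of the radial Bessel integrals for general $f\in\dot{H}^{s}$) is real but routine, and your proposed density argument handles it.
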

%\begin{theorem} \label{t:weightedext}
%Let $d \geq 2$ and $s \in (\frac{1}{2},\frac{d}{2})$. Then
%\begin{equation} \label{e:weightedext}
%\| |\cdot|^{-s} \widehat{G\mathrm{d}\sigma} \|_{L^2(\mathbb{R}^d)}^2 \leq (2\pi)^{d} 2^{1-2s} \frac{\Gamma(2s-1)\Gamma(\frac{d}{2}-s)}{\Gamma(s)^2\Gamma(\frac{d}{2} - 1 + s)} \|G\|_{L^2(\mathbb{S}^{d-1})}^2
%\end{equation}
%holds for all $G \in L^2(\mathbb{S}^{d-1})$, the constant is optimal, and equality holds if and only if $G$ is constant on $\mathbb{S}^{d-1}$.
%\end{theorem}
Here, $*$ denotes convolution and $J_{\frac{d}{2}-1}$ is the Bessel function of the first kind of order $\frac{d}{2}-1$. When $d=3$ the function $|\cdot|^{2s-d} * \mathrm{d}\sigma$ has a particularly nice explicit expression. Indeed, for $s \in (\frac{1}{2},\frac{3}{2})$, one can use the rotation invariance of $\mathrm{d}\sigma$ to obtain
\[
\frac{1}{|\cdot|^{3-2s}} * \mathrm{d}\sigma (x) = C(s) \, \frac{(|x|+1)^{2s-1} - \big||x| - 1\big|^{2s-1}}{|x|} 
\]
for some nonzero constant $C(s)$. Specialising further to the case $s=1$ we see that extremisers to the optimal trace estimate
\[
\| \mathcal{R} f\|_{L^2(\mathbb{S}^2,\mathrm{d}\sigma)} \leq \|\nabla f\|_{L^2(\mathbb{R}^3)}
\] 
are precisely nonzero multiples of $f$ given by
\begin{equation*}
f(x) = \left\{\begin{array}{cccc} 1 & \text{if $|x| \leq 1$} \\ \frac{1}{|x|} & \text{if $|x| > 1$} \end{array} \right.\,.
\end{equation*}

Since we will be handling explicit constants, we clarify that our convention for the Fourier transform is
\[
\widehat{f}(\xi) = \int_{\mathbb{R}^d} f(x) \exp(-i x \cdot \xi) \, \mathrm{d}x\,,
\]
so that the inverse is given by
\[
f^{\vee}(x) = \frac{1}{(2\pi)^d} \int_{\mathbb{R}^d} \widehat{f}(\xi) \exp(i x \cdot \xi) \, \mathrm{d}\xi\,,
\]
for appropriate functions $f : \mathbb{R}^d \to \mathbb{C}$.

The main ingredient in proving Theorem \ref{t:extremisers} is a duality argument and an analysis of weighted $L^2$-estimates for the Fourier extension operator $G \mapsto \widehat{G\mathrm{d}\sigma}$ associated to the sphere, where the weight is homogeneous; see the forthcoming Theorem \ref{t:S1}. With this approach we prove rather more than Theorem \ref{t:extremisers}; we present a sharp version of recent dual trace estimates due to Fang and Wang \cite{FangWang} which incorporate certain angular regularity and permit reverse estimates too. This allows us to extend the sharp estimate in \eqref{e:RStrace} to certain sharp trace estimates for $f$ belonging to Sobolev spaces which incorporate angular regularity and we characterise the extremisers.

In addition, we find the exact operator norm of $\mathcal{R}$ as a mapping $\dot{H}^s(\mathbb{R}^d) \to L^p(\mathbb{S}^{d-1},\mathrm{d}\sigma)$ for the same range of $d$ and $s$, where $p = \frac{2(d-1)}{d-2s}$. This is, in fact, a stronger estimate than \eqref{e:RStrace} because $p > 2$ and we may apply H\"older's inequality on $\mathbb{S}^{d-1}$ to deduce \eqref{e:RStrace}. There is no loss of optimality in the constant under this application of H\"older's inequality because the class of extremisers for $\mathcal{R} : \dot{H}^s(\mathbb{R}^d) \to L^p(\mathbb{S}^{d-1},\mathrm{d}\sigma)$ contains certain radial functions, which means their restriction to the sphere is constant. We obtain the operator norm of $\mathcal{R} : \dot{H}^s(\mathbb{R}^d) \to L^p(\mathbb{S}^{d-1},\mathrm{d}\sigma)$ via the sharp Hardy--Littlewood--Sobolev inequality on $\mathbb{S}^{d-1}$, due to Lieb \cite{Lieb}. We shall use Lieb's characterisation of extremisers for this inequality to obtain a complete description of the extremisers for $\mathcal{R} : \dot{H}^s(\mathbb{R}^d) \to L^p(\mathbb{S}^{d-1},\mathrm{d}\sigma)$. This class of extremisers turns out to be larger than the class  of extremisers in Theorem \ref{t:extremisers}; see the forthcoming Theorem  \ref{t:sharpsharptrace} (this result was independently obtained by Beckner in \cite{Becknermulti}.).

%\begin{acknowledgement}
%The authors would like to thank the referee for their thought-provoking comments on the initial version.
%\end{acknowledgement}

\section{Proof of Theorem \ref{t:extremisers}} \label{section:theta1}
Before proceeding with the full proof of Theorem \ref{t:extremisers}, we enter into a few preliminary remarks to explain how the class of extremisers can be seen to arise. It is natural to expect that certain radial functions are amongst the class of extremisers for \eqref{e:RStrace}. The Euler--Lagrange equation for \eqref{e:RStrace} is
\begin{equation} \label{e:EL}
\widehat{\mathcal{R}^*\mathcal{R} f} (\xi) = \lambda(f)  \widehat{f}(\xi) |\xi|^{2s} 
\end{equation}
for almost all $\xi \in \mathbb{R}^d$ and for some constant $\lambda(f)$ which depends on $f$. A calculation shows that if $f$ is radial then $\widehat{\mathcal{R}^*\mathcal{R} f} $ is a constant multiple of $\widehat{\mathrm{d}\sigma}$ and \eqref{e:EL} implies that $\widehat{f}(\xi)$ is a constant multiple of
\[
\frac{\widehat{\mathrm{d}\sigma}(\xi)}{|\xi|^{2s}} = (2\pi)^{\frac{d}{2}}
\frac{J_{\frac{d}{2}-1}(|\xi|)}{|\xi|^{\frac{d}{2}+2s-1}}\,.
\]
This argument shows that if radial extremisers exist then they necessarily have this form, and one may show that such $f$ are indeed extremisers by substituting into both sides of \eqref{e:RStrace} and calculating everything explicitly. Showing the uniqueness of extremisers requires more work, and this will be established in the remainder of this section.

We shall abbreviate $L^2(\mathbb{S}^{d-1},\mathrm{d}\sigma)$ to $L^2(\mathbb{S}^{d-1})$ and we use the decomposition 
\[
L^2(\mathbb{S}^{d-1}) = \bigoplus_{k=0}^\infty \mathcal{H}_k\,,
\]
where $\mathcal{H}_k$ denotes the space of spherical harmonics of degree $k$. Let $(\lambda_k)_{k \in \mathbb{N}_0}$ be the sequence given by
\[
\lambda_k = 2^{1-2s} \frac{\Gamma(2s-1)\Gamma(k+\frac{d}{2}-s)}{\Gamma(s)^2\Gamma(k+\frac{d}{2} - 1 + s)}\,.
\]
Also, let $\mathcal{S} : L^2(\mathbb{R}^d) \to L^2(\mathbb{S}^{d-1})$ be the linear operator given by
\[
\mathcal{S} g= \mathcal{R} D^{-s}g \,,
\]
where $D = \sqrt{-\Delta}$. The adjoint operator $\mathcal{S}^* : L^2(\mathbb{S}^{d-1}) \to L^2(\mathbb{R}^d)$ can be calculated as
\[
\mathcal{S}^* G = (|\cdot|^{-s}\widehat{G \mathrm{d}\sigma})^{\vee}\,.
\]
Key to our proof of Theorem \ref{t:extremisers} is the following spectral decomposition of $\mathcal{S}\mathcal{S}^*$.
\begin{theorem} \label{t:S1}
Let $d \geq 2$ and $s \in (\frac{1}{2},\frac{d}{2})$. If $k \in \mathbb{N}_0$ and $P_k$ is any spherical harmonic of degree $k$, then
\begin{equation} \label{e:S1eigen}
\mathcal{S}\mathcal{S}^* P_k = \lambda_k P_k\,.
\end{equation}
The sequence of eigenvalues $(\lambda_k)_{k \in \mathbb{N}_0}$ is a strictly decreasing sequence converging to zero and consequently
\begin{equation} \label{e:S*bound}
\| \mathcal{S}^*G \|_{L^2(\mathbb{R}^d)}^2 \leq   \lambda_0 \|G\|_{L^2(\mathbb{S}^{d-1})}^2\,,
\end{equation}
where the constant is optimal and equality holds if and only if $G$ is constant.
\end{theorem}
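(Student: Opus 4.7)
The plan is to diagonalise $\mathcal{S}\mathcal{S}^*$ via Plancherel and a Weber--Schafheitlin integral. First I observe that $\mathcal{S}\mathcal{S}^*$ is a rotation-invariant integral operator on $\mathbb{S}^{d-1}$: tracing through the definitions shows that $\mathcal{S}\mathcal{S}^* G(x)$ equals a constant multiple of $\int_{\mathbb{S}^{d-1}} |x-\omega|^{2s-d} G(\omega) \,\mathrm{d}\sigma(\omega)$ for $x \in \mathbb{S}^{d-1}$, via the Fourier-inversion identity $(|\xi|^{-2s})^\vee(x) \propto |x|^{2s-d}$ on $\mathbb{R}^d$, valid for $s \in (0,d/2)$. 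Since this kernel depends only on $x \cdot \omega$, the operator commutes with $O(d)$, and irreducibility of each $\mathcal{H}_k$ forces $\mathcal{S}\mathcal{S}^* P_k = \widetilde{\lambda}_k P_k$ for some scalar $\widetilde{\lambda}_k$ by Schur's lemma.

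To pin down $\widetilde{\lambda}_k$, I would compute the quadratic form $\|\mathcal{S}^* P_k\|_{L^2(\mathbb{R}^d)}^2 = \widetilde{\lambda}_k \|P_k\|_{L^2(\mathbb{S}^{d-1})}^2$ via Plancherel on $\mathbb{R}^d$. Inserting the standard Bessel representation
\[
\widehat{P_k \mathrm{d}\sigma}(\xi) = (2\pi)^{d/2} (-i)^k |\xi|^{-(d-2)/2} J_{k+(d-2)/2}(|\xi|) P_k(\xi/|\xi|)
\]
and separating radial from angular variables collapses $\|\mathcal{S}^* P_k\|_{L^2(\mathbb{R}^d)}^2$ to $\|P_k\|_{L^2(\mathbb{S}^{d-1})}^2 \int_0^\infty r^{1-2s} J_{k+(d-2)/2}(r)^2 \,\mathrm{d}r$. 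The classical Weber--Schafheitlin identity
\[
\int_0^\infty r^{1-2s} J_\nu(r)^2 \,\mathrm{d}r = 2^{1-2s} \frac{\Gamma(2s-1)\,\Gamma(\nu-s+1)}{\Gamma(s)^2\,\Gamma(\nu+s)},
\]
valid for $\tfrac{1}{2} < s < \nu + 1$ (both holding here since $\nu = k + \tfrac{d-2}{2}$ and $s < \tfrac{d}{2}$), evaluated at $\nu = k + (d-2)/2$ produces precisely $\lambda_k$, thereby establishing \eqref{e:S1eigen}.

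For monotonicity, a Gamma shift gives $\lambda_{k+1}/\lambda_k = (k + \tfrac{d}{2} - s)/(k + \tfrac{d}{2} - 1 + s)$, which is strictly less than one exactly because $s > \tfrac{1}{2}$, and Stirling's formula yields $\lambda_k = O(k^{1-2s}) \to 0$. Finally, \eqref{e:S*bound} follows by orthogonal decomposition: writing $G = \sum_k G_k$ with $G_k \in \mathcal{H}_k$, Plancherel on $\mathbb{S}^{d-1}$ together with \eqref{e:S1eigen} gives
\[
\|\mathcal{S}^* G\|_{L^2(\mathbb{R}^d)}^2 = \sum_k \lambda_k \|G_k\|_{L^2(\mathbb{S}^{d-1})}^2 \leq \lambda_0 \|G\|_{L^2(\mathbb{S}^{d-1})}^2,
\]
with equality if and only if $G_k = 0$ for all $k \geq 1$, i.e.\ if and only if $G$ is constant.

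The main obstacle is the Weber--Schafheitlin computation: locating the correct classical form of the integral, verifying its admissibility against our range of $s$, and matching the resulting ratio of Gamma functions against the stated $\lambda_k$ term by term. Everything else (rotation-equivariance, Schur, the Bessel expansion of $\widehat{P_k \mathrm{d}\sigma}$, and the spherical harmonic decomposition) is standard once that integral is in hand.
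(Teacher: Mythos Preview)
Your proposal is correct, but it computes the eigenvalues $\lambda_k$ by a genuinely different route from the paper. Both approaches first identify $\mathcal{S}\mathcal{S}^*$ with the integral operator on $\mathbb{S}^{d-1}$ with kernel proportional to $|\omega-\varphi|^{2s-d}$ via the Riesz potential Fourier formula. The paper then stays on the sphere and appeals to Lemma~5.1 of \cite{BS} (ultimately the Funk--Hecke formula) to read off both the eigenvalue $\lambda_k$ and the strict monotonicity of $(\lambda_k)$. You instead pass back to $\mathbb{R}^d$: after invoking Schur's lemma to know that $\mathcal{S}\mathcal{S}^*$ acts as a scalar on each $\mathcal{H}_k$, you compute that scalar from the quadratic form $\|\mathcal{S}^*P_k\|_{L^2(\mathbb{R}^d)}^2$ via Plancherel, the Bessel expansion of $\widehat{P_k\mathrm{d}\sigma}$, and the Weber--Schafheitlin integral $\int_0^\infty r^{1-2s}J_\nu(r)^2\,\mathrm{d}r$. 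Your monotonicity argument, reading off $\lambda_{k+1}/\lambda_k = (k+\tfrac{d}{2}-s)/(k+\tfrac{d}{2}-1+s) < 1$ directly from a Gamma shift, is also more explicit than the paper's citation. The trade-off: the paper's approach is shorter because it outsources the computation, and the Funk--Hecke viewpoint is what later connects to the sharp Hardy--Littlewood--Sobolev inequality in Section~4; your approach is more self-contained and makes the role of the hypothesis $s>\tfrac12$ transparent in both the convergence of the Bessel integral and the monotonicity ratio. The final spectral-decomposition step for \eqref{e:S*bound} and its equality case is identical in both.
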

\begin{proof}[Proof of Theorem \ref{t:S1}]
First observe that, for $\omega \in \mathbb{S}^{d-1}$ we have
\begin{equation} \label{e:SS*}
\mathcal{S}\mathcal{S}^*G(\omega) = 2^{-2s} \pi^{-\frac{d}{2}} \, \frac{\Gamma(\frac{d}{2}-s)}{\Gamma(s)} \int_{\mathbb{S}^{d-1}} \frac{G(\varphi)}{|\omega - \varphi|^{d-2s}} \, \mathrm{d}\sigma(\varphi)\,,
\end{equation}
%\begin{align*}
%\mathcal{S}_\mathbf{1}\mathcal{S}_\mathbf{1}^*G(\omega) & = D^{-s}(\mathcal{S}_\mathbf{1}^*G)(\omega) \\
%& = \frac{1}{(2\pi)^d} \int_{\mathbb{R}^d} \widehat{\mathcal{S}_\mathbf{1}^*G}(x) \exp(ix \cdot \omega) \, \frac{\mathrm{d}x}{|x|^{s}} \\
%& = \frac{1}{(2\pi)^d} \int_{\mathbb{R}^d} \widehat{G\mathrm{d}\sigma}(x) \exp(ix \cdot \omega) \, \frac{\mathrm{d}x}{|x|^{2s}} \\
%& = \frac{1}{(2\pi)^d} \int_{\mathbb{S}^{d-1}} G(\varphi) \int_{\mathbb{R}^d}  \exp(-i (\varphi - \omega) \cdot x) \, \frac{\mathrm{d}x}{|x|^{2s}} %\mathrm{d}\sigma(\varphi) \\
%& = 2^{-2s} \pi^{-\frac{d}{2}} \, \frac{\Gamma(\frac{d}{2}-s)}{\Gamma(s)} \int_{\mathbb{S}^{d-1}} \frac{G(\varphi)}{|\omega - \varphi|^{d-2s}} \, \mathrm{d}\sigma(\varphi)\,,
%\end{align*}
where we have used Fourier inversion and the formula
$$
\widehat{\frac{1}{|\cdot|^{d-\zeta}}}(\xi) = 2^\zeta \pi^{\frac{d}{2}} \frac{\Gamma(\frac{1}{2}\zeta)}{\Gamma(\frac{1}{2}(d-\zeta))} \,\frac{1}{|\xi|^\zeta}\,,
$$
for $\zeta \in (0,d)$, giving the Fourier transform of a Riesz potential. If $k \in \mathbb{N}_0$ and $P_k$ is a spherical harmonic of degree $k$, then
it follows immediately from \eqref{e:SS*} and Lemma 5.1 from \cite{BS} that $\mathcal{S}\mathcal{S}^*P_k = \lambda_kP_k$. Lemma 5.1 of \cite{BS} also says that the sequence of eigenvalues $(\lambda_k)_{k \in \mathbb{N}_0}$ is strictly decreasing to zero.

We may now write
\[
\mathcal{S} \mathcal{S}^* = \sum_{k \in \mathbb{N}_0} \lambda_k H_k\,,
\]
where $H_k$ is the projection operator $L^2(\mathbb{S}^{d-1}) \to \mathcal{H}_k$. This means that for each $G \in L^2(\mathbb{S}^{d-1})$ we have
\begin{equation} \label{e:expansion}
\| \mathcal{S}^* G \|_{L^2(\mathbb{R}^d)}^2 = \sum_{k \in \mathbb{N}_0} \lambda_k \|H_k G\|_{L^2(\mathbb{S}^{d-1})}^2
\end{equation}
and \eqref{e:S*bound} clearly follows, with equality when $G \in \mathcal{H}_0$; that is, $G$ is constant. There are no further cases of equality because if $G$ is an extremiser then 
\begin{equation*} 
\sum_{k \in \mathbb{N}_0} \lambda_k \|H_k G\|_{L^2(\mathbb{S}^{d-1})}^2 = \| \mathcal{S}^*G \|_{L^2(\mathbb{R}^d)}^2 = \lambda_0 \|G\|_{L^2(\mathbb{S}^{d-1})}^2 = \sum_{k \in \mathbb{N}_0} \lambda_0 \|H_k G\|_{L^2(\mathbb{S}^{d-1})}^2
\end{equation*}
and the strict decreasingness of $(\lambda_k)_{k \in \mathbb{N}_0}$ forces $H_k G = 0$ for $k \neq 0$. This completes our proof of Theorem \ref{t:S1}.
\end{proof}
By duality, \eqref{e:RStrace} is an immediate consequence of \eqref{e:S*bound}. We now show how to use the characterisation of extremisers in Theorem \ref{t:S1} to deduce Theorem \ref{t:extremisers}. 
\begin{proof}[Proof of Theorem \ref{t:extremisers}]
Writing $f = D^{-s}g$, we have that $f$ is an extremiser for \eqref{e:RStrace} if and only if
\begin{equation} \label{e:Sdualex}
\| \mathcal{S}g \|_{L^2(\mathbb{S}^{d-1})}^2 = \lambda_0 \|g\|_{L^2(\mathbb{R}^d)}^2\,.
\end{equation}
Also, $f$ belongs to the span of $|\cdot|^{2s-d} * \mathrm{d}\sigma$ if and only if $g$ belongs to the span of $|\cdot|^{s-d} * \mathrm{d}\sigma$, or equivalently, that $g$ belongs to the span of $\mathcal{S}^*\mathbf{1}$, where $\mathbf{1}$ denotes the constant function taking the value 1 everywhere. Thus, it suffices to show that \eqref{e:Sdualex} holds if and only if $g \in \mathcal{S}^*(\mathcal{H}_0)$. 

Firstly, if $g \in L^2(\mathbb{R}^d) \setminus \{0\}$ is such that \eqref{e:Sdualex} holds, and if $G \in L^2(\mathbb{S}^{d-1})$ is given by
\[
G = \frac{\mathcal{S} g}{\| \mathcal{S} g \|_{L^2(\mathbb{S}^{d-1})}}\,,
\]
then we have 
\begin{align*}
\lambda_0\| g\|_{L^2(\mathbb{R}^d)}^2 & = \| \mathcal{S} g \|^2_{L^2(\mathbb{S}^{d-1})}  \\
& = \langle \mathcal{S} g, G \rangle_{L^2(\mathbb{S}^{d-1})}^2 \\
& = \langle g, \mathcal{S}^* G \rangle_{L^2(\mathbb{R}^{d})}^2 \leq \|g\|_{L^2(\mathbb{R}^{d})}^2 \|\mathcal{S}^*G\|_{L^2(\mathbb{R}^{d})}^2 \leq   \lambda_0 \|g\|_{L^2(\mathbb{R}^{d})}^2 \,.
\end{align*}
This means $G$ is an extremiser for \eqref{e:S*bound} and hence $G \in \mathcal{H}_0$. Also, from equality in the above application of the Cauchy--Schwarz inequality we know $g$ and $\mathcal{S}^*G$ are linearly dependent, which means $g \in \mathcal{S}^*(\mathcal{H}_0)$, as desired. Conversely, if $g = \mathcal{S}^*G$ for some $G \in \mathcal{H}_0$, then from \eqref{e:S1eigen} we have $\mathcal{S} g = \lambda_0 G$, which implies
\[
\| \mathcal{S} g \|^2_{L^2(\mathbb{S}^{d-1})} = \lambda_0 \langle \mathcal{S} g, G \rangle_{L^2(\mathbb{S}^{d-1})} = \lambda_0\langle g, \mathcal{S}^* G \rangle_{L^2(\mathbb{R}^{d})} = \lambda_0 \|g\|_{L^2(\mathbb{R}^d)}^2 \,.
\]
Therefore $g$ satisfies \eqref{e:Sdualex}, as desired.
\end{proof}

\section{Sharp trace theorems with angular regularity} \label{section:angular}
We may easily generalise the arguments in the previous section to obtain certain sharp trace estimates which allow the inclusion of angular regularity. To state our results in this direction, it is necessary to establish some further notation. 

We write $-\Lambda$ for the Laplace--Beltrami operator on $\mathbb{S}^{d-1}$ and use the well-known fact that the eigenvalues of $-\Lambda$ are $k(k+d-2)$ for $k \in \mathbb{N}_0$ and the corresponding eigenspaces are $\mathcal{H}_k$. Thus 
\[
-\Lambda = \sum_{k=0}^\infty k(k+d-2)H_k\,,
\] 
where, as before, we use $H_k$ for the projection from $L^2(\mathbb{S}^{d-1})$ to $\mathcal{H}_k$. The operator $H_k$ may be written as
\[
H_kG(\omega) = \frac{N_{k,d}}{|\mathbb{S}^{d-1}|} \int_{\mathbb{S}^{d-1}} G(\varphi) P_{k,d}(\omega \cdot \varphi) \, \mathrm{d}\sigma(\varphi)\,,
\]
where $P_{k,d}$ is the Legendre polynomial of degree $k$ in $d$ dimensions and 
\[
N_{k,d} = \frac{(2k+d-2)(k+d-3)!}{k!(d-2)!}\,.
\]
Thus, we may homogeneously extend $H_k$ to functions on $\mathbb{R}^d$ by setting
\[
H_kg(x) = \frac{N_{k,d}}{|\mathbb{S}^{d-1}|} \int_{\mathbb{S}^{d-1}} g(|x|\varphi) P_{k,d}(\tfrac{x}{|x|} \cdot \varphi) \, \mathrm{d}\sigma(\varphi)
\]
and, furthermore, we may define 
\[
\theta(-\Lambda) = \sum_{k=0}^\infty \theta(k(k+d-2))H_k
\]
on either $\mathbb{S}^{d-1}$ or $\mathbb{R}^d$, for functions $\theta$ on $[0,\infty)$.

For appropriate functions $\theta$, let $\mathcal{S}_\theta : L^2(\mathbb{R}^d) \to L^2(\mathbb{S}^{d-1})$ be the linear operator given by
\[
\mathcal{S}_\theta g= \mathcal{R}  \,\overline{\theta}(-\Lambda)  D^{-s}g 
\]
with adjoint operator $\mathcal{S}_\theta^* : L^2(\mathbb{S}^{d-1}) \to L^2(\mathbb{R}^d)$ given by
\[
\mathcal{S}_\theta^* G = (|\cdot|^{-s} \theta(-\Lambda) \widehat{G \mathrm{d}\sigma})^{\vee}\,.
\]
Of course, the operator $\mathcal{S}$ from Section \ref{section:theta1} coincides with $\mathcal{S}_\mathbf{1}$.

We also introduce the notation $\lambda_k(\theta)$ for the sequence given by
\[
\lambda_k(\theta) = 2^{1-2s} \frac{\Gamma(2s-1)\Gamma(k+\frac{d}{2}-s)}{\Gamma(s)^2\Gamma(k+\frac{d}{2} - 1 + s)}|\theta(k(k+d-2))|^2
\]
for appropriate functions $\theta$ on $[0,\infty)$, and related index sets $\mathbf{k}$ and $\mathbf{K}$ by 
\[
\mathbf{k} = \{ k \in \mathbb{N}_0 : \lambda_k(\theta) = \inf_{\ell \in \mathbb{N}_0} \lambda_\ell(\theta)\} \quad \text{and} \quad \mathbf{K} = \{ k \in \mathbb{N}_0 : \lambda_k(\theta) = \sup_{\ell \in \mathbb{N}_0} \lambda_\ell(\theta)\}\,.
\]
The following substantially generalises Theorem \ref{t:S1}.
\begin{theorem} \label{t:Stheta}
Let $d \geq 2$ and $s \in (\frac{1}{2},\frac{d}{2})$. If $k \in \mathbb{N}_0$ and $P_k$ is any spherical harmonic of degree $k$, then
\begin{equation} \label{e:Sthetaeigen}
\mathcal{S}_\theta\mathcal{S}_\theta^* P_k = \lambda_k(\theta)  P_k\,.
\end{equation}
Consequently, we have
\begin{equation} \label{e:Sthetamain} 
\inf_{k \in \mathbb{N}_0} \lambda_k(\theta) \|G\|_{L^2(\mathbb{S}^{d-1})}^2 \leq \| \mathcal{S}_\theta^*G \|_{L^2(\mathbb{R}^d)}^2 \leq  \sup_{k \in \mathbb{N}_0} \lambda_k(\theta) \|G\|_{L^2(\mathbb{S}^{d-1})}^2\,,
\end{equation}
where the constants are optimal. If $\inf_{k} \lambda_k(\theta) > 0$, then the extremisers for the lower bound are precisely the nonzero elements of $\bigoplus_{k \in \mathbf{k}} \mathcal{H}_k$, and if $\sup_{k} \lambda_k(\theta) < \infty$, then the extremisers for the upper bound are precisely nonzero elements of $\bigoplus_{k \in \mathbf{K}} \mathcal{H}_k$. 
\end{theorem}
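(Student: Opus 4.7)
The plan is to reduce everything to the spectral decomposition of $\mathcal{S}\mathcal{S}^*$ already established in Theorem \ref{t:S1}, by exploiting the fact that the angular multiplier $\theta(-\Lambda)$ commutes with all the other operators in sight. Concretely, I would aim to establish the factorisations
\[
\mathcal{S}_\theta = \overline{\theta}(-\Lambda)\, \mathcal{S}, \qquad \mathcal{S}_\theta^* = \mathcal{S}^* \theta(-\Lambda),
\]
where on the right-hand sides the multipliers act on $L^2(\mathbb{S}^{d-1})$. Then, using Theorem \ref{t:S1} in the form $\mathcal{S}\mathcal{S}^* = \sum_{k=0}^\infty \lambda_k H_k$ together with $\overline{\theta}(-\Lambda) \theta(-\Lambda) H_k = |\theta(k(k+d-2))|^2 H_k$, one immediately reads off
\[
\mathcal{S}_\theta \mathcal{S}_\theta^* = \sum_{k=0}^\infty \lambda_k(\theta) H_k,
\]
and evaluating on a spherical harmonic $P_k$ of degree $k$ with $H_j P_k = \delta_{jk} P_k$ yields the eigenvalue relation \eqref{e:Sthetaeigen}.

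For the first factorisation, I would note that $\overline{\theta}(-\Lambda)$ on $\mathbb{R}^d$ was defined as the homogeneous extension of its sphere analogue, so it commutes with the restriction $\mathcal{R}$ and with the radial Fourier multiplier $D^{-s}$. For the second, the key points are that the Euclidean Fourier transform preserves each $\mathcal{H}_k$-component (so $\theta(-\Lambda)$ commutes with it), and that by the Funk--Hecke formula one has $\widehat{Y_k \mathrm{d}\sigma}(\xi) = c_k(|\xi|) Y_k(\xi/|\xi|)$ for $Y_k \in \mathcal{H}_k$ and some radial $c_k$; together these give $\theta(-\Lambda)\widehat{G\mathrm{d}\sigma} = \widehat{\theta(-\Lambda)G \cdot \mathrm{d}\sigma}$, from which the stated formula for $\mathcal{S}_\theta^*$ rearranges into $\mathcal{S}^* \theta(-\Lambda)$.

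The bounds in \eqref{e:Sthetamain} then follow by pairing and using orthogonality of the angular components:
\[
\|\mathcal{S}_\theta^* G\|_{L^2(\mathbb{R}^d)}^2 = \langle \mathcal{S}_\theta \mathcal{S}_\theta^* G, G \rangle_{L^2(\mathbb{S}^{d-1})} = \sum_{k=0}^\infty \lambda_k(\theta)\, \|H_k G\|_{L^2(\mathbb{S}^{d-1})}^2,
\]
which is to be compared term-by-term with $\|G\|_{L^2(\mathbb{S}^{d-1})}^2 = \sum_k \|H_k G\|_{L^2(\mathbb{S}^{d-1})}^2$. Sharpness of each constant is obtained by testing against spherical harmonics of degrees lying in $\mathbf{k}$ or $\mathbf{K}$ (or, when these are empty, sequences of degrees along which $\lambda_k(\theta)$ approaches the infimum or supremum). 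For the characterisation of extremisers in the lower bound, equality forces
\[
\sum_{k=0}^\infty \bigl( \lambda_k(\theta) - \inf_\ell \lambda_\ell(\theta) \bigr)\, \|H_k G\|_{L^2(\mathbb{S}^{d-1})}^2 = 0,
\]
and the hypothesis $\inf_\ell \lambda_\ell(\theta) > 0$ makes every summand nonnegative, so each must vanish, forcing $H_k G = 0$ whenever $k \notin \mathbf{k}$, i.e.\ $G \in \bigoplus_{k \in \mathbf{k}} \mathcal{H}_k$. The upper-bound case is symmetric. The main (and really only) obstacle is the bookkeeping in establishing the two commutation identities; once these are in hand, everything else is a direct consequence of Theorem \ref{t:S1}.
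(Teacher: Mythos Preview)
Your proposal is correct and follows essentially the same route as the paper: factor $\mathcal{S}_\theta = \overline{\theta}(-\Lambda)\mathcal{S}$, invoke the eigenvalue relation from Theorem~\ref{t:S1} to obtain $\mathcal{S}_\theta\mathcal{S}_\theta^* = \sum_k \lambda_k(\theta) H_k$, and then read off the bounds and extremisers from the resulting expansion of $\|\mathcal{S}_\theta^* G\|_{L^2}^2$. One small remark: in your extremiser argument for the lower bound, the nonnegativity of each summand $(\lambda_k(\theta) - \inf_\ell \lambda_\ell(\theta))\|H_k G\|^2$ holds automatically and does not rely on the hypothesis $\inf_\ell \lambda_\ell(\theta) > 0$; that hypothesis is only there to ensure the lower bound is nontrivial so that ``extremiser'' is meaningful.
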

\begin{remarks}
(1) The above statement should be interpreted appropriately in the sense that if the index set $\mathbf{k}$ (respectively, $\mathbf{K}$) is empty, then the lower bound (respectively, upper bound) in \eqref{e:Sthetamain} has no extremisers.

(2) It is an easy consequence of Stirling's approximation that, for some constants $c(d,s) > 0$ and $C(d,s) < \infty$ we have
\[
\frac{c(d,s)}{(1+k)^{2s-1}}  \leq 2^{1-2s} \frac{\Gamma(2s-1)\Gamma(k+\frac{d}{2}-s)}{\Gamma(s)^2\Gamma(k+\frac{d}{2} - 1 + s)} \leq \frac{C(d,s)}{(1+k)^{2s-1}} 
\]
for all $k \in \mathbb{N}_0$. From this, for a given $\theta$, one should expect to be able to easily verify the hypotheses that $\inf_{k} \lambda_k(\theta) > 0$ and $\sup_{k} \lambda_k(\theta) < \infty$ in Theorem \ref{t:Stheta}.
\end{remarks}
\begin{proof}[Proof of Theorem \ref{t:Stheta}]
Since $\mathcal{S}_\theta = \overline{\theta}(-\Lambda) \mathcal{S}_\mathbf{1}$, we obtain immediately from \eqref{e:S1eigen} that $\mathcal{S}_\theta\mathcal{S}_\theta^*P_k = \lambda_k(\theta) P_k$. It follows that
\begin{equation} \label{e:expansion}
\| \mathcal{S}_\theta^* G \|_{L^2(\mathbb{R}^d)}^2 = \sum_{k \in \mathbb{N}_0} \lambda_k(\theta) \|H_k G\|_{L^2(\mathbb{S}^{d-1})}^2
\end{equation}
and the bounds \eqref{e:Sthetamain} clearly follow. We also note here that if $\mathbf{K} = \emptyset$ (respectively $\mathbf{k} = \emptyset$) then the upper bound (respectively, the lower bound) in \eqref{e:Sthetamain} holds strictly for $G \in L^2(\mathbb{S}^{d-1}) \setminus \{0\}$, and hence there are no extremisers. Despite this, it is easy to see that the bounds are optimal in this case.

In the case where $\mathbf{K}$ is nonempty, suppose we have equality in the upper bound, so that
\begin{equation} \label{e:upperex}
\| \mathcal{S}_\theta^*G \|_{L^2(\mathbb{R}^d)}^2 = \lambda_k(\theta) \|G\|_{L^2(\mathbb{S}^{d-1})}^2 
\end{equation}
for any choice of $k \in \mathbf{K}$. Then
\[
\sum_{\ell \in \mathbb{N}_0} \lambda_\ell(\theta) \|H_\ell G\|_{L^2(\mathbb{S}^{d-1})}^2  =  \sum_{\ell \in \mathbb{N}_0} \lambda_k(\theta) \|H_\ell G\|_{L^2(\mathbb{S}^{d-1})}^2  \,.
\]
Since $\lambda_\ell(\theta) < \lambda_k(\theta)$ whenever $\ell \notin \mathbf{K}$, we have $H_\ell G = 0$ for $\ell \notin \mathbf{K}$, and hence $G = \sum_{\ell \in \mathbf{K}} H_\ell G \in (\bigoplus_{\ell \in \mathbf{K}} \mathcal{H}_\ell) \setminus \{0\}$ as desired. Conversely, it is clear that if $G = \sum_{\ell \in \mathbf{K}} H_\ell G$ then \eqref{e:upperex} holds. A similar argument shows that the space of extremisers for the lower bound in \eqref{e:Sthetamain} is precisely $(\bigoplus_{\ell \in \mathbf{k}} \mathcal{H}_\ell) \setminus \{0\}$.
\end{proof}

\subsection*{Sharp Fourier extension equivalences with angular regularity}
Via an application of Plancherel's Theorem, as an aside, we note that the estimates in \eqref{e:Sthetamain} are equivalent to sharp lower and upper weighted $L^2$-estimates for the
Fourier extension operator $G \mapsto \widehat{G \mathrm{d}\sigma}$, with a homogeneous weight, and incorporating certain angular regularity given by $\theta$. For certain natural choices of $\theta$, one may simultaneously obtain $\inf_{k} \lambda_k(\theta) > 0$ \emph{and} $\sup_{k} \lambda_k(\theta) < \infty$ so that \eqref{e:Sthetamain} is a \emph{bona fide} equivalence. In fact, choosing $\theta(\varrho) = (1+\varrho)^{\frac{2s-1}{4}}$ we obtain the following.
\begin{corollary} \label{c:FangWangsharp}
Let $d \geq 2$ and $s \in (\frac{1}{2},\frac{d}{2})$. Then
\begin{equation} \label{e:FangWang}
 \inf_{k \in \mathbb{N}_0} \widetilde{\lambda}_k \|G\|_{L^2(\mathbb{S}^{d-1})}^2 \leq \| |\cdot|^{-s} (1-\Lambda)^{\frac{2s-1}{4}} \widehat{G \mathrm{d}\sigma} \|_{L^2(\mathbb{R}^d)}^2 \leq  \sup_{k \in \mathbb{N}_0} \widetilde{\lambda}_k \|G\|_{L^2(\mathbb{S}^{d-1})}^2\,,
\end{equation}
where
\[
\widetilde{\lambda}_k = (2\pi)^d 2^{1-2s} \frac{\Gamma(2s-1)\Gamma(k+\frac{d}{2}-s)}{\Gamma(s)^2\Gamma(k+\frac{d}{2} - 1 + s)} (1+k(k+d-2))^{s - \frac{1}{2}}
\]
and the constants $\inf_{k} \widetilde{\lambda}_k > 0$ and $\sup_{k} \widetilde{\lambda}_k < \infty$ are optimal. 
\end{corollary}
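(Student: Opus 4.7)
The plan is to deduce the corollary directly from Theorem \ref{t:Stheta} by choosing the specific angular weight $\theta(\varrho) = (1+\varrho)^{(2s-1)/4}$ and translating the $L^2$-norm of $\mathcal{S}_\theta^* G$ into an $L^2$-norm on the Fourier side via Plancherel's theorem.

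First I would write out $\mathcal{S}_\theta^* G = (|\cdot|^{-s}\theta(-\Lambda)\widehat{G\mathrm{d}\sigma})^{\vee}$ and apply Plancherel in the form $\|h^{\vee}\|_{L^2(\mathbb{R}^d)}^2 = (2\pi)^{-d}\|h\|_{L^2(\mathbb{R}^d)}^2$, which is the correct form for the Fourier convention used in the paper. This gives the identity
\[
\| \mathcal{S}_\theta^* G \|_{L^2(\mathbb{R}^d)}^2 = \frac{1}{(2\pi)^d} \big\| |\cdot|^{-s}\theta(-\Lambda)\widehat{G\mathrm{d}\sigma} \big\|_{L^2(\mathbb{R}^d)}^2.
\]
For $\theta(\varrho)=(1+\varrho)^{(2s-1)/4}$ we have $\theta(-\Lambda)=(1-\Lambda)^{(2s-1)/4}$, and Theorem \ref{t:Stheta} applied to this $\theta$ yields exactly the double-sided bound in \eqref{e:FangWang} with $\widetilde{\lambda}_k = (2\pi)^d \lambda_k(\theta)$; the factor $(2\pi)^d$ in the definition of $\widetilde{\lambda}_k$ compensates for the Plancherel constant so the formula in the statement of the corollary is matched.

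Next I would verify that $\inf_k \widetilde{\lambda}_k > 0$ and $\sup_k \widetilde{\lambda}_k < \infty$, which is exactly the hypothesis needed to apply Theorem \ref{t:Stheta} in its sharp form and to ensure the bounds in \eqref{e:FangWang} are nontrivial at both ends. This is where the Stirling asymptotic from the remark is used: the Gamma-function prefactor decays like $(1+k)^{-(2s-1)}$ while the angular factor $(1+k(k+d-2))^{s-1/2}$ grows like $(1+k)^{2s-1}$, so the two rates cancel and the sequence $\widetilde{\lambda}_k$ is pinched between two positive constants uniformly in $k$. Optimality of $\inf_k \widetilde{\lambda}_k$ and $\sup_k \widetilde{\lambda}_k$ in \eqref{e:FangWang} then follows directly from the optimality clause in Theorem \ref{t:Stheta}.

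There is no real obstacle here; the content is genuinely a corollary, and the only mildly delicate points are keeping track of the Plancherel constant $(2\pi)^d$ in the convention adopted by the paper and confirming the Stirling-based cancellation between the Gamma-function ratio and the angular weight. Once both are in hand, the result is immediate from the previous theorem.
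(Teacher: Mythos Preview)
Your proposal is correct and follows essentially the same route as the paper: the paper states (in the paragraph immediately preceding the corollary) that the result is obtained from Theorem~\ref{t:Stheta} via Plancherel's theorem and the choice $\theta(\varrho) = (1+\varrho)^{(2s-1)/4}$, with the Stirling estimate from Remark~(2) used to check that $\inf_k \widetilde{\lambda}_k > 0$ and $\sup_k \widetilde{\lambda}_k < \infty$. Your handling of the $(2\pi)^d$ factor from the Fourier convention and of the cancellation between the Gamma-ratio decay and the angular growth is exactly what is needed.
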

\begin{remarks}
(1) In Theorem 1.1 of \cite{FangWang}, the equivalence of the quantities 
$$
\| |\cdot|^{-s} (1-\Lambda)^{\frac{2s-1}{4}} \widehat{G \mathrm{d}\sigma} \|_{L^2(\mathbb{R}^d)}^2 \qquad \text{and} \qquad \|G\|_{L^2(\mathbb{S}^{d-1})}^2
$$ 
was established, but the optimal constants were not obtained.

(2) The sharp lower and upper bounds $\inf_{k} \widetilde{\lambda}_k > 0$ and $\sup_{k} \widetilde{\lambda}_k < \infty$ in Corollary \ref{c:FangWangsharp}, along with the associated space of extremisers, can be calculated more explicitly, but it is not straightforward and the values of $k \in \mathbb{N}_0$ at which these extrema are attained (if any) depends in a very delicate way on the dimension $d$ and the relative size of $s$ to $d$. When $d=2,3$, we have
\[
\inf_{k \in \mathbb{N}_0} \widetilde{\lambda}_k = \lim_{k \to \infty} \widetilde{\lambda}_k = 2^{1-2s} \frac{\Gamma(2s-1)}{\Gamma(s)^2}
\]
and 
\[
\sup_{k \in \mathbb{N}_0} \widetilde{\lambda}_k = \widetilde{\lambda}_0 = 2^{1-2s} \frac{\Gamma(2s-1)\Gamma(\frac{d}{2} - s)}{\Gamma(s)^2 \Gamma(\frac{d}{2} + s - 1)}\,. 
\]
Also, in such dimensions, there are no extremisers for the lower bound, and constant functions on $\mathbb{S}^{d-1}$ are the only extremisers for the upper bound. For $d \geq 4$ the situation is rather more complicated and since this is not the main focus of this paper, we refer the reader to Theorem 2.1 of \cite{BS2} where this information can easily be extracted.

(3) We highlight one further very distinguished case where $d=4$ and $s=1$. Here, one can show that $\widetilde{\lambda}_k = \frac{1}{2}$ for \emph{all} $k \in \mathbb{N}_0$ and hence
\eqref{e:FangWang} is the rather miraculous identity
\begin{equation*}
\| |\cdot|^{-1} (1-\Lambda)^{\frac{1}{4}} \widehat{G \mathrm{d}\sigma} \|_{L^2(\mathbb{R}^4)}^2 = \tfrac{(2\pi)^d}{2} \|G\|_{L^2(\mathbb{S}^{3})}^2
\end{equation*}
for all $G \in L^2(\mathbb{S}^3)$.
\end{remarks}

\subsection*{Sharp trace estimates with angular regularity}
We recall that 
\[
\mathcal{S}_\theta g= \mathcal{R} \, \overline{\theta}(-\Lambda)  D^{-s}g \qquad \text{and} \qquad \mathcal{S}_\theta^* G = (|\cdot|^{-s} \theta(-\Lambda) \widehat{G \mathrm{d}\sigma})^{\vee}\,.\] 
From Theorem \ref{t:Stheta} we may obtain the following sharp trace estimates which allow the inclusion of angular regularity.
\begin{corollary} \label{c:ourtrace}
Let $d \geq 2$ and $s \in (\frac{1}{2},\frac{d}{2})$. Then, for $\theta$ such that $\sup_{k} \lambda_k(\theta) < \infty$ we have
\begin{equation} \label{e:sharptracegeneral}
\| \mathcal{R} \, \overline{\theta}(-\Lambda) f \|^2_{L^2(\mathbb{S}^{d-1})} \leq \sup_{k \in \mathbb{N}_0} \lambda_k(\theta) \|f\|^2_{\dot{H}^s(\mathbb{R}^d)}
\end{equation}
where the constant is optimal and the space of extremisers is precisely the nonzero elements of $D^{-s}\mathcal{S}_\theta^*(\bigoplus_{k \in \mathbf{K}} \mathcal{H}_k)$.
\end{corollary}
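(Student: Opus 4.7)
The plan is to follow the same duality strategy used in the proof of Theorem \ref{t:extremisers}, but now applied to $\mathcal{S}_\theta$ rather than $\mathcal{S}$. Writing $f = D^{-s}g$, the map $f \mapsto g$ is an isometry from $\dot{H}^s(\mathbb{R}^d)$ onto $L^2(\mathbb{R}^d)$, and $\mathcal{R}\,\overline{\theta}(-\Lambda)f = \mathcal{S}_\theta g$. So the inequality \eqref{e:sharptracegeneral} is equivalent to the bound
\[
\|\mathcal{S}_\theta g\|_{L^2(\mathbb{S}^{d-1})}^2 \leq \sup_{k \in \mathbb{N}_0} \lambda_k(\theta) \|g\|_{L^2(\mathbb{R}^d)}^2,
\]
and $f$ is an extremiser precisely when $g$ is extremal for this estimate. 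This reduces the corollary to the dual statement concerning $\mathcal{S}_\theta$, which follows directly from Theorem \ref{t:Stheta} since $\|\mathcal{S}_\theta\|_{\mathrm{op}} = \|\mathcal{S}_\theta^*\|_{\mathrm{op}}$.

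For the characterisation of extremisers, I would mirror the argument in the proof of Theorem \ref{t:extremisers}. Suppose $g \in L^2(\mathbb{R}^d)\setminus\{0\}$ achieves equality. Setting $G = \mathcal{S}_\theta g / \|\mathcal{S}_\theta g\|_{L^2(\mathbb{S}^{d-1})}$ and using the chain
\[
\sup_k \lambda_k(\theta) \|g\|^2_{L^2(\mathbb{R}^d)} = \|\mathcal{S}_\theta g\|^2_{L^2(\mathbb{S}^{d-1})} = \langle \mathcal{S}_\theta g, G\rangle^2 = \langle g, \mathcal{S}_\theta^* G\rangle^2 \leq \|g\|^2_{L^2(\mathbb{R}^d)}\|\mathcal{S}_\theta^*G\|^2_{L^2(\mathbb{R}^d)} \leq \sup_k \lambda_k(\theta)\|g\|^2_{L^2(\mathbb{R}^d)},
\]
forces equality throughout. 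The last inequality being an equality tells us $G$ is an extremiser for the upper bound in \eqref{e:Sthetamain}, so by Theorem \ref{t:Stheta} we have $G \in \bigoplus_{k \in \mathbf{K}}\mathcal{H}_k$. Equality in the Cauchy--Schwarz step forces $g$ and $\mathcal{S}_\theta^* G$ to be linearly dependent, so $g \in \mathcal{S}_\theta^*(\bigoplus_{k \in \mathbf{K}}\mathcal{H}_k)$, and then $f \in D^{-s}\mathcal{S}_\theta^*(\bigoplus_{k \in \mathbf{K}}\mathcal{H}_k)$.

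Conversely, if $g = \mathcal{S}_\theta^* G$ with $G \in \bigoplus_{k \in \mathbf{K}}\mathcal{H}_k$, then by \eqref{e:Sthetaeigen} and the definition of $\mathbf{K}$ we obtain $\mathcal{S}_\theta g = \mathcal{S}_\theta\mathcal{S}_\theta^*G = \sup_k \lambda_k(\theta)\,G$, so
\[
\|\mathcal{S}_\theta g\|^2_{L^2(\mathbb{S}^{d-1})} = \sup_k \lambda_k(\theta) \langle \mathcal{S}_\theta g, G\rangle = \sup_k \lambda_k(\theta)\langle g, \mathcal{S}_\theta^* G\rangle = \sup_k \lambda_k(\theta)\|g\|^2_{L^2(\mathbb{R}^d)},
\]
confirming $g$ is an extremiser.

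There is no serious obstacle here: once Theorem \ref{t:Stheta} is in hand, the whole proof is a mechanical transcription of the duality argument used for Theorem \ref{t:extremisers}. The only point that requires care is the computation of $\mathcal{S}_\theta\mathcal{S}_\theta^* G$ in the converse direction, where I must use the fact that all $k \in \mathbf{K}$ give the \emph{same} eigenvalue $\sup_k \lambda_k(\theta)$; this is precisely how $\mathbf{K}$ was defined, so the argument goes through cleanly.
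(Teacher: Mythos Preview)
Your proposal is correct and follows essentially the same approach as the paper: reduce via $f=D^{-s}g$ to the bound for $\mathcal{S}_\theta$, invoke Theorem~\ref{t:Stheta} and duality for the inequality, and then run the Cauchy--Schwarz chain with $G=\mathcal{S}_\theta g/\|\mathcal{S}_\theta g\|$ for the extremiser characterisation, with the converse via the eigenvalue identity \eqref{e:Sthetaeigen}. The only point the paper makes explicit that you leave implicit is the case $\mathbf{K}=\emptyset$ (where the chain forces $G$ to be an extremiser for \eqref{e:Sthetamain}, contradicting Theorem~\ref{t:Stheta}); this is harmless since it is covered by your appeal to Theorem~\ref{t:Stheta}.
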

\begin{proof} 
The estimate \eqref{e:sharptracegeneral} is an immediate consequence of the upper bound in \eqref{e:Sthetamain} and duality, so it remains to characterise the space of extremisers. For this, it suffices to show that
\begin{equation} \label{e:dualex}
\| \mathcal{S}_\theta g\|_{L^2(\mathbb{S}^{d-1})}^2 = \sup_{k \in \mathbb{N}_0} \lambda_k(\theta) \|g\|_{L^2(\mathbb{R}^d)}^2
\end{equation}
if and only if $g \in \mathcal{S}_\theta^*(\bigoplus_{k \in \mathbf{K}} \mathcal{H}_k)$. When $\mathbf{K}$ is empty we know that there are no extremisers for the upper bound in \eqref{e:Sthetamain} and it follows that there are no $g \in L^2(\mathbb{R}^d) \setminus \{0\}$ satisfying \eqref{e:dualex}. Thus, it remains to consider the case where $\mathbf{K}$ is nonempty, and we take any $k \in \mathbf{K}$. If $g \in L^2(\mathbb{R}^d) \setminus \{0\}$ is such that \eqref{e:dualex} holds, and if $G \in L^2(\mathbb{S}^{d-1})$ is given by
\[
G = \frac{\mathcal{S}_\theta g}{\| \mathcal{S}_\theta g \|_{L^2(\mathbb{S}^{d-1})}}\,,
\]
then we have 
\begin{align*}
\lambda_k(\theta) \| g\|_{L^2(\mathbb{R}^d)}^2 = \langle g, \mathcal{S}_\theta^* G \rangle_{L^2(\mathbb{R}^{d})}^2 \leq \|g\|_{L^2(\mathbb{R}^{d})}^2 \|\mathcal{S}_\theta^*G\|_{L^2(\mathbb{R}^{d})}^2 \leq   \lambda_k(\theta) \|g\|_{L^2(\mathbb{R}^{d})}^2 \,.
\end{align*}
This means $G$ is an extremiser for the upper bound in \eqref{e:Sthetamain}, hence $G \in \bigoplus_{\ell \in \mathbf{K}} \mathcal{H}_\ell$; moreover, $g$ and $\mathcal{S}_\theta^*G$ must be linearly dependent, which means $g \in \mathcal{S}_\theta^*(\bigoplus_{\ell \in \mathbf{K}} \mathcal{H}_\ell)$, as desired.

Conversely, suppose $g = \mathcal{S}_\theta^*G$ for some $G = \sum_{\ell \in \mathbf{K}} H_\ell G \in \bigoplus_{\ell \in \mathbf{K}} \mathcal{H}_\ell$ so that $\mathcal{S}_\theta g = \lambda_k(\theta) G$ by \eqref{e:Sthetaeigen}. Hence
\[
\| \mathcal{S}_\theta g \|^2_{L^2(\mathbb{S}^{d-1})} = \lambda_k(\theta) \langle \mathcal{S}_\theta g, G \rangle_{L^2(\mathbb{S}^{d-1})} = \lambda_k(\theta) \langle g, \mathcal{S}_\theta^* G \rangle_{L^2(\mathbb{R}^{d})} = \lambda_k(\theta)\|g\|_{L^2(\mathbb{R}^d)}^2 
\]
and $g$ satisfies \eqref{e:dualex}.
\end{proof}
Except for the case where $\theta = \mathbf{1}$, the optimal constant in \eqref{e:sharptracegeneral} is new. Recall that the optimal constant when $\theta = \mathbf{1}$ was  found in \cite{RStrace}, but no information regarding extremisers was given. Thus, the characterisation of extremisers in Corollary \ref{c:ourtrace} is new in all cases.

\section{Sharp trace theorem into $L^p(\mathbb{S}^{d-1})$}

Let $\mathfrak{L}(d,s)$ denote the space of complex-valued functions $G$ on $\mathbb{S}^{d-1}$ given by
\begin{equation*}
G(\omega) = \frac{c}{(1 - x \cdot \omega)^{\frac{d}{2} + s -1}}
\end{equation*}
for some $c \in \mathbb{C} \setminus \{0\}$ and $x \in \mathbb{R}^d$ with $|x| < 1$. This class of functions is precisely the set of extremisers to the sharp Hardy--Littlewood--Sobolev estimate on $\mathbb{S}^{d-1}$, due to Lieb \cite{Lieb},
\begin{equation} \label{e:Lieb}
\int_{\mathbb{S}^{d-1}} \int_{\mathbb{S}^{d-1}} \frac{G(\omega) \overline{G(\varphi)}}{|\omega - \varphi|^{d-2s}} \, \mathrm{d}\sigma(\omega) \mathrm{d}\sigma(\varphi) \leq \mathbf{L}(d,s) \|G\|_{L^q(\mathbb{S}^{d-1})}^2\,,
\end{equation}
where $q = \frac{2(d-1)}{d + 2s - 2}$ and
\[
\mathbf{L}(d,s) = \pi^{\frac{d-2s}{2}} \frac{\Gamma(\frac{2s-1}{2})}{\Gamma(\frac{d}{2}+s-1)} \bigg( \frac{\Gamma(d-1)}{\Gamma(\frac{d-1}{2})} \bigg)^{\frac{2s-1}{d-1}}
\]
is the optimal constant. Clearly $\mathfrak{L}(d,s)$ contains the (nonzero) constant functions on $\mathbb{S}^{d-1}$ by taking $x=0$.
\begin{theorem} \label{t:sharpsharptrace}
Let $d \geq 2$, $s \in (\frac{1}{2},\frac{d}{2})$ and $p = \frac{2(d-1)}{d-2s}$. Then
\begin{equation} \label{e:sharpsharptrace}
\| \mathcal{R} f \|_{L^{p}(\mathbb{S}^{d-1})}^2 \leq 2^{1-2s} \frac{\Gamma(2s-1)\Gamma(\frac{d}{2}-s)}{\Gamma(s)^2\Gamma(\frac{d}{2} - 1 + s)} \bigg( \frac{\Gamma(\frac{d}{2})}{2\pi^{\frac{d}{2}}} \bigg)^{\frac{2s-1}{d-1}}  \| f\|_{\dot{H}^s(\mathbb{R}^d)}^2
\end{equation}
for all $f \in \dot{H}^s(\mathbb{R}^d)$. The constant is optimal and equality holds if and only if 
\[
f = \frac{1}{|\cdot|^{d-2s}} * G \mathrm{d}\sigma
\]
for some $G \in \mathfrak{L}(d,s)$.
\end{theorem}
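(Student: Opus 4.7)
The plan is to run a $TT^*$/duality argument exactly as in the proof of Theorem~\ref{t:extremisers}, but substituting Lieb's sharp HLS estimate \eqref{e:Lieb} for the $L^2 \to L^2$ bound obtained from the strict decrease of $(\lambda_k)$. Writing $f = D^{-s}g$, we have $\|f\|_{\dot H^s} = \|g\|_{L^2}$ and $\mathcal{R}f = \mathcal{S}g$, so \eqref{e:sharpsharptrace} is equivalent to
\[
\|\mathcal{S} g\|_{L^p(\mathbb{S}^{d-1})}^2 \leq C(d,s) \|g\|_{L^2(\mathbb{R}^d)}^2,
\]
with $C(d,s)$ being the constant on the right-hand side of \eqref{e:sharpsharptrace}. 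Since $p' = q = \frac{2(d-1)}{d+2s-2}$ is precisely the HLS exponent, taking adjoints rephrases the goal as
\[
\|\mathcal{S}^*G\|_{L^2(\mathbb{R}^d)}^2 \leq C(d,s) \|G\|_{L^{p'}(\mathbb{S}^{d-1})}^2.
\]

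The next step is to combine the representation \eqref{e:SS*} with Lieb's inequality. Indeed,
\[
\|\mathcal{S}^*G\|_{L^2}^2 = \langle \mathcal{S}\mathcal{S}^*G, G\rangle_{L^2(\mathbb{S}^{d-1})} = 2^{-2s}\pi^{-d/2}\frac{\Gamma(\frac{d}{2}-s)}{\Gamma(s)} \int_{\mathbb{S}^{d-1}}\int_{\mathbb{S}^{d-1}} \frac{G(\omega)\overline{G(\varphi)}}{|\omega-\varphi|^{d-2s}} \,\mathrm{d}\sigma\mathrm{d}\sigma,
\]
which by \eqref{e:Lieb} is bounded by $2^{-2s}\pi^{-d/2}\Gamma(\tfrac{d}{2}-s)\Gamma(s)^{-1}\mathbf{L}(d,s)\|G\|_{L^{p'}}^2$. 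I would then check that this constant equals $C(d,s)$ using two applications of the Legendre duplication formula: once in the form $\Gamma(s-\frac12)\Gamma(s) = 2^{2-2s}\sqrt{\pi}\,\Gamma(2s-1)$ to handle the numerical prefactor, and once in the form $\Gamma(\frac{d-1}{2})\Gamma(\frac{d}{2}) = 2^{2-d}\sqrt{\pi}\,\Gamma(d-1)$ to convert the HLS factor $\Gamma(d-1)/\Gamma(\frac{d-1}{2})$ into $2^{d-2}\Gamma(\frac{d}{2})/\sqrt{\pi}$. A short bookkeeping then shows the powers of $2$ and $\pi$ reconcile exactly; this is the only genuinely computational step.

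For the extremiser characterisation I would mirror the duality argument used in the proof of Theorem~\ref{t:extremisers}. If $g$ extremises $\mathcal{S} : L^2 \to L^p$, pick $G\in L^{p'}(\mathbb{S}^{d-1})$ of unit norm with $\langle \mathcal{S}g, G\rangle_{L^2(\mathbb{S}^{d-1})} = \|\mathcal{S}g\|_{L^p}$, and then the chain
\[
C(d,s)^{1/2}\|g\|_{L^2}  = |\langle g, \mathcal{S}^*G\rangle_{L^2(\mathbb{R}^d)}| \leq \|g\|_{L^2}\|\mathcal{S}^*G\|_{L^2} \leq C(d,s)^{1/2}\|g\|_{L^2}
\]
forces equality in Cauchy--Schwarz (so $g$ is proportional to $\mathcal{S}^*G$) and equality in $\|\mathcal{S}^*G\|_{L^2}^2 \leq C(d,s)\|G\|_{L^{p'}}^2$, hence in Lieb's HLS, so that $G\in \mathfrak{L}(d,s)$. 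Conversely, if $G\in\mathfrak{L}(d,s)$ and $g=\mathcal{S}^*G$, the same identities run backwards. Finally, using the Riesz potential formula from Section 2, $\widehat{D^{-s}\mathcal{S}^*G}(\xi) = |\xi|^{-2s}\widehat{G\mathrm{d}\sigma}(\xi)$ is a nonzero constant multiple of $\widehat{|\cdot|^{-(d-2s)}*G\mathrm{d}\sigma}$, so $f = D^{-s}g$ is a nonzero constant multiple of $|\cdot|^{-(d-2s)}*G\mathrm{d}\sigma$, matching the claimed form.

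Conceptually, the only obstacle is the gamma-function manipulation in Step~2; the extremiser analysis is essentially forced, since once the inequality has been reduced to Lieb's HLS via $\mathcal{S}\mathcal{S}^*$, all the equality cases are inherited directly from Lieb's characterisation together with the Cauchy--Schwarz/Hölder dualisation.
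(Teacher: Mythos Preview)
Your proposal is correct and follows essentially the same route as the paper: reduce to $\mathcal{S}^*$ via duality, identify $\langle \mathcal{S}\mathcal{S}^*G,G\rangle$ with the HLS bilinear form through \eqref{e:SS*}, apply Lieb's sharp inequality \eqref{e:Lieb}, and match constants with the duplication formula. Your forward extremiser argument (normalised dual $G$, Cauchy--Schwarz chain) is exactly what the paper does.

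The one place where you diverge is the converse direction of the extremiser characterisation. Your phrase ``the same identities run backwards'' is a little terse: once $g=\mathcal{S}^*G$ with $G\in\mathfrak{L}(d,s)$, the displayed chain only yields $\|g\|_{L^2}^2=C(d,s)\|G\|_{L^q}^2$, not $\|\mathcal{S}g\|_{L^p}$ directly. What makes it work is one extra H\"older step, $\|\mathcal{S}g\|_{L^p}\geq \langle \mathcal{S}g,G\rangle/\|G\|_{L^q}=\|g\|_{L^2}^2/\|G\|_{L^q}=C(d,s)\|G\|_{L^q}$, which combined with the already-proved upper bound $\|\mathcal{S}g\|_{L^p}\leq C(d,s)^{1/2}\|g\|_{L^2}=C(d,s)\|G\|_{L^q}$ forces equality. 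The paper instead invokes the Euler--Lagrange equation for \eqref{e:Lieb} to obtain the pointwise identity $\mathcal{S}\mathcal{S}^*G=C(d,s)\|G\|_{L^q}^{2-q}|G|^{q-2}G$ and then computes $\|\mathcal{S}\mathcal{S}^*G\|_{L^p}$ explicitly. Your abstract duality route is cleaner and avoids the Euler--Lagrange computation; the paper's route has the minor advantage of exhibiting $\mathcal{S}g$ explicitly. Both are valid.
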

\begin{proof}
Let $C(d,s)$ be given by
\[
C(d,s) = 2^{1-2s} \frac{\Gamma(2s-1)\Gamma(\frac{d}{2}-s)}{\Gamma(s)^2\Gamma(\frac{d}{2} - 1 + s)} \bigg( \frac{\Gamma(\frac{d}{2})}{2\pi^{\frac{d}{2}}} \bigg)^{\frac{2s-1}{d-1}}\,.
\]
If $q = \frac{2(d-1)}{d + 2s - 2}$ then it follows from \eqref{e:SS*}, \eqref{e:Lieb} and the duplication formula for the Gamma function
$\Gamma(z)\Gamma(z+\tfrac{1}{2}) = 2^{1-2z} \pi^{\frac{1}{2}}\Gamma(2z)$, that
\begin{equation} \label{e:sharpsharptracedual}
\| \mathcal{S}^* G \|_{L^2(\mathbb{R}^d)}^2 = \langle \mathcal{S}\mathcal{S}^* G, G \rangle \leq C(d,s) \|G\|_{L^q(\mathbb{S}^{d-1})}^2\,.
\end{equation}
Here, the constant is optimal and $G$ is an extremiser if and only if $G \in \mathfrak{L}(d,s)$. Since $q' = p$, by duality, we get
\begin{equation} \label{e:almostsharpsharptrace}
\| \mathcal{S}g\|_{L^{p}(\mathbb{S}^{d-1})}^2 \leq C(d,s) \|g\|_{L^2(\mathbb{R}^d)}^2
\end{equation}
and hence \eqref{e:sharpsharptrace}.

It remains to characterise the extremsiers for \eqref{e:sharpsharptrace}. To do this, we follow the idea of the proof of Theorem \ref{t:extremisers}. First, if $g$ is an extremiser for \eqref{e:almostsharpsharptrace} then we define
\[
G = \frac{|\mathcal{S} g|^{p-2}\mathcal{S} g}{\|\mathcal{S} g\|_{L^{p}(\mathbb{S}^{d-1})}^{p-1}}
\]
so that $\|G\|_{L^q(\mathbb{S}^{d-1})} = 1$ and
\begin{align*}
C(d,s) \| g\|_{L^2(\mathbb{R}^d)}^2 & = \| \mathcal{S} g \|^2_{L^{p}(\mathbb{S}^{d-1})}  \\
& = \langle \mathcal{S} g, G \rangle_{L^2(\mathbb{S}^{d-1})}^2 \\
& = \langle g, \mathcal{S}^* G \rangle_{L^2(\mathbb{R}^{d})}^2 \leq \|g\|_{L^2(\mathbb{R}^{d})}^2 \|\mathcal{S}^*G\|_{L^2(\mathbb{R}^{d})}^2 \leq   C(d,s)  \| g\|_{L^2(\mathbb{R}^d)}^2 \,.
\end{align*}
Equality throughout implies firstly that $G$ is an extremiser for \eqref{e:sharpsharptracedual} and hence $G \in \mathfrak{L}(d,s)$. Secondly, equality at the application of Cauchy--Schwarz forces $g$ and $\mathcal{S}^* G$ to be linearly dependent. So, if $g$ is an extremiser for \eqref{e:almostsharpsharptrace}, then $g$ belongs to the image of $\mathfrak{L}(d,s)$ under $\mathcal{S}^* $. 

Now we show that such $g$ are indeed extremisers for \eqref{e:almostsharpsharptrace}. So, suppose $g = \mathcal{S}^* G$ for some $G \in \mathfrak{L}(d,s)$. Since $G$ is an extremiser for \eqref{e:Lieb}, in particular, it satisfies the corresponding Euler--Lagrange equation, which one can check is
\begin{equation} \label{e:ELLieb}
\int_{\mathbb{S}^{d-1}} \frac{G(\varphi)}{|\omega - \varphi|^{d-2s}} \, \mathrm{d}\sigma(\varphi) = \lambda(G) |G(\omega)|^{q-2}G(\omega)\,,
\end{equation}
where
\[
\lambda(G) = \frac{1}{\|G\|_{L^q(\mathbb{S}^{d-1})}^q} \int_{\mathbb{S}^{d-1}} \int_{\mathbb{S}^{d-1}} \frac{G(\omega) \overline{G(\varphi)}}{|\omega - \varphi|^{d-2s}} \, \mathrm{d}\sigma(\omega)\mathrm{d}\sigma(\varphi)\,.
\]
Using that $G$ is an extremiser for \eqref{e:Lieb}, we obtain
\[
\lambda(G) = \mathbf{L}(d,s) \|G\|_{L^q(\mathbb{S}^{d-1})}^{2-q}
\]
and therefore, by \eqref{e:SS*}, we have
\begin{equation} \label{e:geneigen}
\mathcal{S} \mathcal{S}^* G(\omega) = C(d,s) \|G\|_{L^q(\mathbb{S}^{d-1})}^{2-q} |G(\omega)|^{q-2}G(\omega)\,.
\end{equation}
Using \eqref{e:geneigen} we have
\begin{align*}
\|\mathcal{S}g\|_{L^{p}(\mathbb{S}^{d-1})}^2 = \|\mathcal{S}\mathcal{S}^*G\|_{L^{p}(\mathbb{S}^{d-1})}^2 = C(d,s)^2 \|G\|_{L^q(\mathbb{S}^{d-1})}^{2}
\end{align*}
and using that $G$ is an extremiser for \eqref{e:sharpsharptracedual}, it follows that $g$ is an extremiser for \eqref{e:almostsharpsharptrace}, as desired.
\end{proof}

The exponent $p = \frac{2(d-1)}{d-2s}$ in Theorem \ref{t:sharpsharptrace} cannot be improved in the sense that $\mathcal{R}$ is not a bounded operator from $\dot{H}^s(\mathbb{R}^d)$ to $L^p(\mathbb{S}^{d-1})$ for any $p > \frac{2(d-1)}{d-2s}$. This can be seen, for example, by showing that $q \geq \frac{2(d-1)}{d + 2s - 2}$ is necessary for \eqref{e:sharpsharptracedual} by testing on the so-called Knapp example (where $G$ is a characteristic function of a small cap on the sphere).

Lieb's sharp Hardy--Littlewood--Sobolev inequality \eqref{e:Lieb} and characterisation of extremisers (along with the well-known characterisation of extremisers for H\"older's inequality) is more than enough to prove Theorem \ref{t:extremisers}, since Theorem \ref{t:sharpsharptrace} is a stronger result via an application of H\"older's inequality. Our argument to prove Theorem \ref{t:extremisers} in Section \ref{section:theta1} is based on Theorem \ref{t:S1} which is perhaps more natural if one wishes to consider the classical trace theorem $\dot{H}^s(\mathbb{R}^d) \to L^2(\mathbb{S}^{d-1})$, since it is a slightly more direct approach and readily permits the generalisations in Section \ref{section:angular}. 

Interestingly, a key aspect of the proof of Theorem \ref{t:S1} and the proof of Lieb's inequality \eqref{e:Lieb} given in \cite{FrankLieb} is that integral operators on the sphere with kernels $K(\omega \cdot \varphi)$ diagonalise with respect to the spherical harmonic decomposition of $L^2(\mathbb{S}^{d-1})$, and the eigenvalues are explicitly computable via the Funk--Hecke formula. The argument in \cite{FrankLieb} leading to \eqref{e:Lieb} is, unsurprisingly, more involved than the argument required to prove Theorem \ref{t:S1}, which follows almost immediately from the Funk--Hecke formula. Also, such a diagonalisation property and the Funk--Hecke formula were important ingredients in fundamental work of Beckner \cite{Beckner} and the very recent and elegant argument of Foschi \cite{Foschi} where the optimal constant and extremisers were identified for the Stein--Tomas extension estimate $L^2(\mathbb{S}^2) \to L^{4}(\mathbb{R}^3)$ for the Fourier extension operator $G \mapsto \widehat{G \mathrm{d}\sigma}$ associated to the sphere in $\mathbb{R}^3$.

\end{document}